\documentclass[dvipsnames, 11pt]{article}

\usepackage[latin1, utf8]{inputenc}
\usepackage[T1]{fontenc}
\usepackage[english]{babel}
\usepackage{todonotes}
\usepackage{hyperref}
\usepackage{amsmath}
\usepackage{amssymb}
\usepackage{amsthm}
\usepackage{mathtools}
\usepackage{xkeyval}
\usepackage{stmaryrd}
\usepackage{authblk}

\usepackage[left=2.5cm,top=3cm,right=2.5cm,bottom=3cm,bindingoffset=0.5cm]{geometry}
\newcommand{\R}{\mathbb{R}}
\newcommand{\N}{\mathbb{N}}

\newcommand{\C}{\mathbb{C}}

\newcommand{\bx}{\bold x}

\DeclareMathOperator*{\argmax}{arg\,max}

\newcommand{\my}{\rho_{1/n}}
\newcommand{\leja}{\mathcal L_\tau} 
\newcommand {\e}  {\varepsilon}
\newcommand{\vertiii}[1]{{\vert\kern-0.25ex\vert\kern-0.25ex\vert #1 
    \vert\kern-0.25ex\vert\kern-0.25ex\vert}}

\newtheorem{lmm}{Lemma}[section]
\newtheorem{thrm}{Theorem}[section]

\newtheorem{rmrk}{Remark}[section]

\everymath{\displaystyle}
\title{Improved polynomial estimate for the Lebesgue constants \\of Leja sequences on finite unions of intervals}
\author[a]{Camille Pouchol}
\affil[a]{Université Paris Cité, CNRS, MAP5, F-75006 Paris, France. \footnote{Email address: camille.pouchol@u-pariscite.fr.}}
\date{\empty}

\begin{document}
\maketitle
\begin{abstract}
We prove a new polynomial upper bound for the Lebesgue constants of
\(\tau\)-Leja sequences on finite unions of real intervals. Building on an
estimate of Andrievskii and Nazarov, we replace the global separation of the
first \(n\) Leja points by a local separation estimate at the Green-function
scale \(\rho_{1/n}\). Combined with a packing argument and estimates on
\(\rho_{1/n}\) near and away from the endpoints, this yields $\Lambda_n = O(n^{2\alpha_\tau})$ uniformly over all possible $\tau$-Leja sequences, with $\alpha_\tau = 1+\theta+2\lambda^{-1}\ln(\tau^{-1})$, where $\lambda =0.24565978 \ldots$ and \(\theta=0.08899552\ldots\) In particular, for genuine Leja sequences on
finite unions of intervals, including the benchmark case $K = [-1,1]$, this improves the previously known best exponent \(13/4 = 3.25\) to around
\(2 + 2 \theta = 2.17799105\ldots\)
\end{abstract}

\section{Introduction}

Let $K$ be a regular compact subset of $\C$, of diameter $d := \mathrm{diam}(K)$. We let $\|\cdot\|$ be the uniform norm of continuous functions from $K$ to $\C$. 

 We are interested in Leja sequences associated to $K$, i.e., sequences $(x_n)_{n \in \N} \in K^\N$ satisfying
\begin{equation}
\label{genuine_leja} 
\forall n \in \N^*, \quad x_n \in \argmax_{x \in K} \prod_{k=0}^{n-1}|x - x_k|.
\end{equation}

Following~\cite{Explicit_Lebesgue_Interval2022} (see also~\cite{PseudoLeja2012} for even more general relaxations), we define the set of $\tau$-Leja sequences associated to $K$, where we relax the maximisation assumption above to a maximisation up to a fixed multiplicative error $\tau \in (0,1]$:
\begin{equation}
\label{tau_leja}
\mathcal L_\tau := \bigg\{\bx = (x_n)_{n \in \N} \in K^\N, \; \; \forall n \in \N^*, \; \prod_{k=0}^{n-1}|x_n- x_k|   \geq \tau \sup_{x \in K} \prod_{k=0}^{n-1}|x- x_k|\bigg\}.
\end{equation}
For $\tau = 1$, $\mathcal L:= \mathcal L_1$ recovers  genuine Leja sequences defined by~\eqref{genuine_leja}. Note that, already for $\tau = 1$, $\mathcal L_\tau$ is infinite due to the freedom of $x_0 \in K$, but also because there can be several maximisers for the product underlying~\eqref{genuine_leja}.

Our interest is in the quality of polynomial interpolation based on $\bx \in \leja$. It is well known that the key tool for analysing it is Lebesgue's constant, defined for $n \in \N^*$ by \[\Lambda_n^\bx := \sup_{x \in K} \sum_{k=0}^{n-1} |\ell^\bx_{k,n}(x)|, \qquad  \ell_{k,n}^\bx(x) := \prod_{\substack{0 \leq j  \leq n-1 \\ j \neq k}} \frac{x - x_j}{x_k -x_j}. \]

Indeed, if $L_n^\bx(f)$ denotes the unique interpolating polynomial of degree $n$ of a continuous function at the points $x_0, \ldots, x_{n-1}$, the inequality
\begin{equation}
\label{bound_lebesgue}
\|L_n^\bx(f) - f\| \leq (1 + \Lambda_n^\bx) \inf_{\mathrm{deg}(P) \leq n} \|P - f\|.
\end{equation}
As a result, depending on the additional regularity $f$ may have, the right-hand side converges to~$0$ provided that $(\Lambda_n^\bx)_{n \in \N^*}$ diverges sufficiently slowly.
 
 \paragraph{State of the art.} Leja sequences were introduced by Leja in~\cite{Leja1957}. Unlike other families of points, such as Fekete or Chebyshev points, they form genuinely nested sequences. In other words, the $(n+1)$th Leja point only requires adding a new point according to the rule~\eqref{genuine_leja}, while the aforementioned families require computing $n+1$ new points.
 
 Numerically, Leja sequences are known to provide stable polynomial interpolation~\cite{Reichel1990}. Yet, in spite of the relative simplicity of their definition, such stability lacked theoretical guarantees until recently, except for very specific sets, such as the disk~\cite{Calvi_UnitDisk_2011}. Earlier theoretical evidence was obtained by Taylor and Totik, who proved non-exponential growth for many planar compact sets~\cite{Totik2010}. 
 
 For a long time, polynomial results for the interval were only obtained for real projections of Leja sequences on the unit disk, not the actual Leja sequences~\cite{CalviPhung2012, Lebesgue_Leja_Disk2013}.
 
The main breakthrough was obtained by  Andrievskii and Nazarov, who derived the first explicit polynomial bound~\cite{Explicit_Lebesgue_Interval2022} for very general subsets of~$\R$. Then, it was shown in~\cite{Totik2023} that the Lebesgue constant of $\tau$-Leja sequences on arbitrary nonpolar compact subsets of $\C$ is subexponential (i.e. $(\Lambda_n^\bx)^{1/n} \to 1$), which shows by~\eqref{bound_lebesgue} that $(L_n^\bx(f))_{n \in \N^*}$ converges uniformly to $f$ whenever $f$ is holomorphic on a neighbourhood of $K$.
 
In the simplest benchmark case where $K = [-1,1]$, the best known polynomial estimate is $O(n^{13/4})$, as shown in~\cite{Explicit_Lebesgue_Interval2022}, although numerical simulations suggest that $O(n)$ is likely to hold.

\paragraph{Main result.}
By localising the approach of~\cite{Explicit_Lebesgue_Interval2022}, we further narrow this gap. We obtain an improved polynomial estimate for the Lebesgue constant of $\tau$-Leja sequences on the real line, in the specific case of finite unions of intervals. More precisely, the exponent is lowered by about $1.07$. For $\tau = 1$, we go from an estimate in $O(n^{13/4})$ to $O(n^{\gamma})$ with $\gamma = 2.17799105\ldots$ 

Let us now give our main result in full detail, which relies on the two following numerical constants: 
\begin{itemize}
\item \[\text{$\lambda$ is the positive solution of $e^{e^{\lambda}}(e^{\lambda}-1) = 1$},\] which satisfies $\lambda =0.24565978 \ldots$
\item 
\begin{equation} 
\label{theta}
\theta := \sup_{0< a<1,\, B> 1} \frac{B \ln(1-a)+ \ln(B+a)}{(1+B) \ln(a^{-1})},
\end{equation}
which satisfies $\theta =0.08899552 \ldots$
\end{itemize}
For $\tau \in (0,1]$, also define 
\[\alpha_\tau:=1+ \theta + 2 \lambda^{-1} \ln(\tau^{-1}).\]
The full estimate is as follows, for finite unions of intervals. Recall that we are assuming that $K$ is regular, so this excludes degenerate intervals.
\begin{thrm}
\label{main_thm}
Assume that $K$ is a finite union of intervals. Then, uniformly over $\tau$-Leja sequences $\bx \in \leja$,
\[ \Lambda_n^\bx  = O(n^{2 \alpha_\tau}).\]
If $K = [-1,1]$, one has the explicit bound $\Lambda_n^\bx \leq C_\tau \, n^{2 \alpha_\tau}$ for all $\bx \in \leja$ and $n \in \N^*$, with 
\[
C_\tau
=
\tau^{-1}
+
2^{\alpha_\tau+1}
(1+e)^{\alpha_\tau}
\bigl(2(1+e)+\tau\bigr)
\tau^{-(\alpha_\tau+3)}
\left[
\tfrac{2^{\alpha_\tau+1}}{\alpha_\tau} +\tfrac{\pi^{\alpha_\tau}}{\alpha_\tau-1}
\right]
\left(
\tfrac{2(1+e)+2\tau}{2(1+e)+\tau}
\right)^{\alpha_\tau+1}.
\]
\end{thrm}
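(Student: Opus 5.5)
We follow the scheme of Andrievskii and Nazarov, but localise the separation estimate. For $x\in K$ and $0\le k\le n-1$, write $\omega_n(x)=\prod_{j=0}^{n-1}(x-x_j)$, so that $|\ell_{k,n}^\bx(x)| = |\omega_n(x)|/(|x-x_k|\,|\omega_n'(x_k)|)$.

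\textbf{Step 1: numerator and denominator.} The $\tau$-Leja property controls both. For the numerator, $|\omega_n(x)|\le \tau^{-1}\prod_{j<n}|x_n-x_j|$. For the denominator, at $x_k$ we use that $x_k$ was chosen as a near-maximiser of $\prod_{j<k}|x-x_j|$. The later factors $\prod_{k<j<n}|x_k-x_j|$ are bounded below by comparing with the maximal products at later steps. This yields the Andrievskii--Nazarov type inequality
\[
|\ell_{k,n}^\bx(x)|\le \frac{C\,\tau^{-c}\,(\text{local scale})^{\alpha_\tau}}{|x-x_k|^{\cdots}},
\]
which we write in the Green-function normalisation. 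Concretely, we aim for
\[
|\ell_{k,n}^\bx(x)|\lesssim \tau^{-c}\Bigl(\frac{\rho_{1/n}(x_k)}{|x-x_k|+\rho_{1/n}(x_k)}\Bigr)^{-\alpha_\tau}\cdot\Bigl(\frac{\rho_{1/n}(x)}{\rho_{1/n}(x_k)}\Bigr)^{\cdots}.
\]
Here $\rho_{1/n}(y)$ is the distance from $y$ to the level curve $\{g_K=1/n\}$ of the Green function with pole at infinity. The exponent $\theta$ arises from optimising a two-parameter inequality, which is exactly the sup in~\eqref{theta}. The term $2\lambda^{-1}\ln(\tau^{-1})$ comes from iterating the near-maximality through a geometric number of scales, with ratio governed by $e^{e^\lambda}(e^\lambda-1)=1$.

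\textbf{Step 2: local separation.} This is the key new ingredient. The global bound $|x_i-x_j|\gtrsim n^{-2}$ is replaced by the local statement
\[
|x_i-x_j|\ge c_\tau\,\rho_{1/n}(x_i)\quad\text{for all } i\neq j<n.
\]
We prove it by contradiction. If two points are too close at scale $\rho_{1/n}$, the later one could not be a $\tau$-near maximiser. To see this, we use a Bernstein/Remez type estimate on $\omega_m$ at scale $\rho_{1/m}$, namely $|p(z)|\le e^{m g_K(z)}\|p\|$, together with monotonicity $\rho_{1/m}\ge\rho_{1/n}$ for $m\le n$. Tracking constants here produces factors like $(1+e)$ and $2(1+e)+\tau$ in $C_\tau$.

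\textbf{Step 3: packing and summation.} Fix $x$ and sum over $k$. By local separation, each dyadic shell $\{x_k: 2^{m}\rho\le |x-x_k|<2^{m+1}\rho\}$ contains a bounded number of points relative to the $\rho_{1/n}$-metric. On a finite union of intervals, $\rho_{1/n}(y)\asymp \min(n^{-2}, n^{-1}\sqrt{\mathrm{dist}(y,\partial K)})$, and for $[-1,1]$ we have explicitly $\rho_{1/n}(y)\ge$ a Chebyshev-type expression involving $\pi/n$. This lets us bound the ratio $\rho_{1/n}(x)/\rho_{1/n}(x_k)$ by $O(n^{2})$ in the worst case, near an endpoint. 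The resulting series in the shell index converges, giving the factors $2^{\alpha_\tau+1}/\alpha_\tau$ and $\pi^{\alpha_\tau}/(\alpha_\tau-1)$ (the latter from $\sum m^{-\alpha_\tau}$). Combining Steps 1--3 gives $\Lambda_n^\bx\le\tau^{-1}+(\dots)\,n^{2\alpha_\tau}$. The $\tau^{-1}$ term accounts for the $k$ with $x_k$ closest to $x$.

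\textbf{Main obstacle.} The delicate point is Step 1. There the product over later indices must be controlled at the local scale $\rho_{1/n}(x_k)$ rather than globally, and the extremal problem defining $\theta$ must be recovered exactly. I would first try to split the indices $j>k$ into those within distance $B\rho_{1/n}(x_k)$ and those outside it, with $a$ the relative position parameter. I would then optimise over $(a,B)$, with uniformity in $K$ coming from Green-function comparisons near the endpoints of each interval.
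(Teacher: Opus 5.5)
There is a genuine gap, and it sits in your Step 1, on which Step 3 depends. You never actually establish a bound on $\ell_{k,n}$. The target inequality has unspecified exponents, the origins of $\theta$ and $\lambda$ are asserted rather than derived, and you yourself flag this step as the unresolved main obstacle.

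Worse, as written its main factor equals $\bigl((|x-x_k|+\rho_{1/n}(x_k))/\rho_{1/n}(x_k)\bigr)^{\alpha_\tau}$, which \emph{grows} with $|x-x_k|$, and the companion factor has an unspecified exponent. So it cannot produce a convergent sum over dyadic shells around a fixed $x$, and your Step 3 has nothing to rest on.

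The missing realisation is that no new pointwise, $x$-dependent version of the Andrievskii--Nazarov estimate is needed. Their lemma, used as a black box, already reads $\|\ell_{k,n}\|\le 2\tau^{-2}(d/\varepsilon_{k,n})^{\alpha_\tau}$ with the $k$-dependent spacing $\varepsilon_{k,n}=\min_{k<j<n}|x_k-x_j|$. This is also where $\theta$ and $\lambda$ enter, with $\theta$ obtained by sharpening their Elementary Inequality 2. The only localisation required is in the spacing, which is your Step 2: $\varepsilon_{k,n}\ge \frac{\tau}{1+e}\,\rho_{1/n}(x_k)$. There you also need a step turning the Bernstein--Walsh bound $|P_j|\le e\|P_j\|$ on discs of radius $<\rho_{1/n}(x_k)$, together with $P_j(x_k)=0$, into smallness of $|P_j(x_j)|$. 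For example, use the Cauchy estimate on $P_j'$ along $[x_k,x_j]$ to contradict $|P_j(x_j)|\ge\tau\|P_j\|$.

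With that in hand, there is no need to fix $x$. Use $\Lambda_n\le\sum_k\|\ell_{k,n}\|$ and treat $k=n-1$ separately via $\|\ell_{n-1,n}\|\le\tau^{-1}$. That, and not ``the $x_k$ closest to $x$'', is the source of the $\tau^{-1}$ in $C_\tau$.

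Then bound $\sum_{k\le n-2}\rho_{1/n}(x_k)^{-\alpha}$, with $\alpha=\alpha_\tau$, by packing. Set $\rho_k=\rho_{1/n}(x_k)$ and $\eta=c_\tau/(2+c_\tau)$. The separation and the $1$-Lipschitz property of $\rho_{1/n}$ make the intervals $(x_k-\eta\rho_k,x_k+\eta\rho_k)$ pairwise disjoint, and $\rho_{1/n}\le(1+\eta)\rho_k$ on each. Each interval also meets $K$ in a set of measure $\gtrsim\rho_k$, uniformly in $n$, because $\|\rho_{1/n}\|$ is bounded. Hence $\sum_k\rho_k^{-\alpha}\lesssim\int_K\rho_{1/n}^{-(\alpha+1)}\,dx$.

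This integral is $O(n^{2\alpha})$ for $\alpha>1$ because $\rho_{1/n}(x)\gtrsim\max\bigl(n^{-1}\sqrt{\mathrm{dist}(x,\partial K)},\,n^{-2}\bigr)$. Note the maximum, not the minimum you wrote: with a minimum you would recover nothing beyond the global $n^{-2}$ separation, and the whole gain of a factor $n$ disappears.

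Finally, for $K=[-1,1]$ the factors $2^{\alpha_\tau+1}/\alpha_\tau$ and $\pi^{\alpha_\tau}/(\alpha_\tau-1)$ come from integrating the explicit $\rho_{1/n}$. The first comes from the endpoint region $|x|\ge 1/\cosh(1/n)$. The second comes from the interior, via $x=\cos\vartheta$ and $\sin\vartheta\ge 2\vartheta/\pi$, not from a series $\sum m^{-\alpha_\tau}$. As it stands, your constant bookkeeping is matched to $C_\tau$ rather than derived.
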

In the above result, the constant associated to $O$ is independent of $\bx \in \leja$, but it depends both on $K$ and $\tau$.

\paragraph{Strategy of proof.}
 For $k \leq n-2$, and $\bx \in \leja$, let \[\e_{k,n}^\bx := \inf_{k <j< n} |x_k - x_j|.\]
For the purposes of this paper, let us rewrite the key Lemma of~\cite{Explicit_Lebesgue_Interval2022} in the following form;
\begin{lmm}[\cite{Explicit_Lebesgue_Interval2022}]
\label{bound_AN}
If $K$ is a regular compact subset of $\R$, then for all $0 < \tau \leq 1$ and $\bx \in \leja$, there holds
\begin{equation}
\label{bound_AN_eq}
\forall k \in \{0, \ldots, n-2\}, \quad \|\ell_{k,n}^\bx\|  \leq \frac{2}{\tau^2}\, \Big(\frac{d}{\e_{k,n}^\bx}\Big)^{\alpha_\tau}, 
\end{equation}
\end{lmm}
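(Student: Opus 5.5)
The plan is to split the fundamental polynomial at the index $k$ and use the $\tau$-Leja property once for the "past" points and repeatedly for the "future" points. For $x\in K$ write
\[
|\ell_{k,n}^\bx(x)| = \prod_{j<k}\frac{|x-x_j|}{|x_k-x_j|}\cdot \prod_{k<j<n}\frac{|x-x_j|}{|x_k-x_j|} .
\]

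\textbf{The past points.} Since $x_k$ was chosen by the $\tau$-Leja rule at step $k$, we have $\prod_{j<k}|x_k-x_j|\ge \tau \sup_K\prod_{j<k}|x-x_j|$. This alone would bound the first factor by $\tau^{-1}$, but it cannot be applied factor by factor, because the supremum is attained at a different point than the one controlling the second product. So I would not separate the two products. Instead I would work with the normalised polynomials
\[
p_m(x):=\prod_{j<m}(x-x_j), \qquad M_m:=\|p_m\|, \qquad \delta_m:=\prod_{j<m,\, j\ne k}|x_k-x_j|,
\]
and aim to bound $\|p_n/(\cdot-x_k)\|/\delta_n$. This quantity dominates $\|\ell^\bx_{k,n}\|$ up to the factor $\sup_x |x-x_k|$, which can be absorbed.

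\textbf{The telescoping estimate.} For $m>k$, I would compare the quantity at step $m+1$ with the quantity at step $m$. Multiplying by $|x-x_m|$ in the numerator and by $|x_k-x_m|$ in the denominator costs at most $\frac{d}{|x_k-x_m|}\le \frac{d}{\e^\bx_{k,n}}$ per step. This crude bound gives $(d/\e)^{n-k}$, which is far too weak, so the gain must come from the sup-norms. The $\tau$-Leja rule gives $M_m\le \tau^{-1}|p_m(x_m)|$, hence $M_{m+1}\ge |p_m(x_m)|\cdot\sup_x|x-x_m|\ge \tau M_m\,(d/2)$, roughly. Thus the denominators $\delta_m$ grow at essentially the rate of the Chebyshev-type constants, like $\mathrm{cap}(K)^m$. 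The key step is then to prove a local polynomial inequality on $K\subset\R$ for $q:=p_m/(\cdot-x_k)$. The inequality should say that a polynomial which is almost maximal (up to $\tau$) at $x_m$ and vanishes at $x_k$ cannot be much larger elsewhere than a power of $d/|x_k-x_m|$, with the number of such "expensive" steps controlled.

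\textbf{The expected obstacle.} The main difficulty, where $\lambda$ and $\theta$ enter, is an induction on the scale $|x_j-x_k|$. I would group the indices $k<j<n$ according to $|x_j-x_k|\in[e^{-s-1}d,e^{-s}d)$ and show that each scale contributes a bounded multiplicative factor. The ratio between successive relevant scales should be forced to be at least $e^{\lambda}$ by the fixed-point relation $e^{e^\lambda}(e^\lambda-1)=1$, which is what balances the loss $\tau^{-2}$ per scale against the geometric gain. Summing $\ln(\tau^{-2})$ over the roughly $\lambda^{-1}\ln(d/\e)$ scales produces the term $2\lambda^{-1}\ln(\tau^{-1})$ in the exponent. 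The residual cost in the extremal $\tau=1$ configuration comes from the two-parameter optimisation (a gap of relative size $a$, with $B$ points on the other side) defining $\theta$; this yields the exponent $1+\theta$. The final constant $2/\tau^2$ collects the loss $\tau^{-1}$ from the past points and one extra $\tau^{-1}$ and factor $2$ from the final step. I expect the per-scale extremal estimate, which gives exactly $\theta$, to be the hard part. My first attempt there would be a Remez/Bernstein–Walsh comparison with the two-point configuration $(a,B)$ appearing in~\eqref{theta}.
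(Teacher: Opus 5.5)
Your proposal has a genuine gap. It is a plan whose decisive step is never formulated, let alone proved. The ``local polynomial inequality'' is described only by what it ``should say''. The claims that each scale $|x_j-x_k|\in[e^{-s-1}d,e^{-s}d)$ costs a bounded factor, that successive relevant scales differ by at least $e^{\lambda}$, and that the $\tau=1$ exponent is $1+\theta$ are read off from the target rather than derived. The identity $(d/\e)^{2\lambda^{-1}\ln\tau^{-1}}=\tau^{-2\lambda^{-1}\ln(d/\e)}$ makes your scale count consistent with $\alpha_\tau$, but consistency is not a proof.

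The bounded-factor-per-scale claim is in fact the entire content of the lemma. For an interior $x_k$, a fixed annulus around $x_k$ contains more and more later points as $n$ grows, since Leja points equidistribute. Your crude step bound charges each of them up to $e^{s+1}$, and nothing in the proposal explains how the Leja property prevents these charges from accumulating.

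Two intermediate statements are also wrong.
\begin{itemize}
\item The growth claim $M_{m+1}\ge|p_m(x_m)|\sup_x|x-x_m|\ge\tau M_m\,d/2$ fails already for $K=[-1,1]$, $x_0=1$, $x_1=-1$, where $M_1=2$ and $M_2=1$. The function $|p_m(x)|\,|x-x_m|$ vanishes at $x_m$, so a value there times a supremum attained elsewhere is no lower bound. Moreover $M_m^{1/m}\to\mathrm{cap}(K)$, which is $1/2$ here, not $d/2=1$.
\item In \eqref{theta} the parameter $B$ is not a number of points. The inequality in the appendix is homogeneous in $(A,B,a)$, all of which are lengths.
\end{itemize}

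The paper does not reprove the lemma. It is Lemma~2 of \cite{Explicit_Lebesgue_Interval2022}, whose proof is used as is, except that the constant $1/8$ coming from Elementary Inequality~2 there is replaced by the best constant in that inequality. The $\lambda$-term is inherited unchanged. What the paper actually proves is that this best constant equals $\theta$:
\begin{itemize}
\item normalise $A=1$ and take logarithms;
\item for $B\le 1$ the inequality is trivial, since $B\ln(1-a)+\ln(B+a)\le 0$ by $(1-a)^{-B}\ge 1+Ba\ge B+a$;
\item for $B>1$ the right-hand side becomes $\theta\ln(a^{-1})$, and the optimal constant is exactly the supremum \eqref{theta}.
\end{itemize}

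The shape of that inequality also shows what kind of estimate is needed. With $x_k$ at the origin, $0=\tfrac{B}{A+B}A+\tfrac{A}{A+B}(-B)$. The left-hand side is the geometric mean, with these barycentric weights, of the factors $|y-a|/|0-a|$ at $y=A$ and $y=-B$ created by a new zero $a\in(0,A)$. This indicates a two-point concavity estimate for $\log|\ell|$ between real zeros, rather than a count of points per scale.

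To close the gap you must either invoke the Andrievskii--Nazarov argument and prove the sharp Elementary Inequality~2 as above, or genuinely supply the missing per-step inequality, including the mechanism that produces $\lambda$.
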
 
First, let us note that the above estimate is in fact a minor improvement of Lemma~2 of~\cite{Explicit_Lebesgue_Interval2022}, where the constant $\theta$  defined by~\eqref{theta} replaces the constant $1/8$. This can be seen just by closely inspecting the proof and making the so-called Elementary Inequality 2 in~\cite{Explicit_Lebesgue_Interval2022} sharp; we provide the details in the Appendix.

In~\cite{Explicit_Lebesgue_Interval2022}, this estimate is eventually used in a weaker form, where $\e_{k,n}^\bx$ is replaced with the overall minimal spacing up to index $n-1$, namely $\e_n^\bx := \textstyle \inf_{0 \leq k<j <n} |x_k - x_j|$.
For a large class of subsets of $\R$ (which includes finite unions of intervals), it is well known that the above spacing satisfies $\e^\bx_n \gtrsim n^{-2}$. Then, combining the crude bound
\begin{equation}
\label{crude_bound} 
\Lambda_n^\bx \leq n \max_{0 \leq k \leq n-1} \|\ell_{k,n}^\bx\|,
\end{equation}
with these two estimates leads, uniformly in $\bx \in \leja$, to
\[\Lambda_n^\bx = n \, O\big(n^{2\alpha\tau}\big) = O\big(n^{2\alpha_\tau+1}\big) .\]

This was the previously best known bound for such unions of intervals. In particular, for genuine ($\tau = 1$) Leja sequences on $[-1,1]$, the bound announced in~\cite{Explicit_Lebesgue_Interval2022} is $\Lambda_n^\bx = O(n^{13/4})$, which corresponds to $2 \alpha_1 + 1$ when one uses the suboptimal $1/8$ rather than $\theta$.

The dependence of~\eqref{bound_AN_eq} with respect to the index $k$ is lost, which suggests a local approach. 
Our main improvement comes from 
\begin{itemize}
\item[(i)] deriving a local lower bound $\e_{k,n}^\bx \gtrsim \my(x_k)$, which can be seen as a local version of Lemma~1 in~\cite{Explicit_Lebesgue_Interval2022}. This is the content of Lemma~\ref{est_spacing}.
\item[(ii)] in the case of a finite union of nondegenerate intervals, showing that $\textstyle \sum_{k=0}^{n-1} \my^{-\alpha}(x_k)  \lesssim \int_K \my^{-(\alpha+1)}(x) \,dx$ for all $\alpha >1$. This is based on Lemma~\ref{sum_to_int}.
\item[(iii)] providing the estimate $\textstyle \int_K \my^{-(\alpha+1)}(x) \,dx = O(n^{2 \alpha})$, based on sharp estimates for $\my$. This is the content of Lemma~\ref{estimate_int} and Lemma~\ref{estimate_rho}, respectively.
\end{itemize}

If $\my$ were of the order $n^{-2}$ on the whole of $K$, nothing would be gained by replacing the crude bound~\eqref{crude_bound}  by this sharper bound. It is well known that $\my$ in fact scales like $n^{-1}$ away from the endpoints, meaning that there might be room for improvement. As it turns out, the contribution of the endpoint region and the interior region are of the same order, and the approach therefore gains a full factor $n$.


\paragraph{Perspectives.}

First, since our approach is grounded in the estimate~\eqref{bound_AN_eq} of~\cite{Explicit_Lebesgue_Interval2022} which crucially requires $K \subset \R$, an extension to more general regular compact sets $K \subset \C$ requires significantly new ideas.

Establishing an improved polynomial estimate for more general regular compact subsets of $\R$ than finite unions of intervals might be possible. This would require generalising the approximation of the sum by an integral with respect to a suitably defined measure; it could be that the equilibrium measure of $K$ is a good object. We have not been able to do so.

\paragraph{Bottlenecks.}
We end up with an estimate $\Lambda_n^\bx = O(n^{\gamma})$ with $\gamma = 2.17799105 \ldots$ for $\tau=1$. How far can we go with the current approach, and how close can we get to an $O(n)$ bound for $K = [-1,1]$? We focus on $\tau = 1$ for simplicity. First, the proof of Lemma~\ref{bound_AN} imposes that the exponent be greater than $1$, so that $1 + \theta$ is close to being sharp. 

Then, provided that one sticks with the estimate (i), the estimates coming from (ii) and (iii) are sharp for $\alpha>1$. Indeed any Leja sequence contains either $1$ or $-1$, and $\my(\pm 1) = \cosh(\tfrac{1}{n^2}) -1 \sim \tfrac{1}{2 n^2}$, so the sum in (ii) is at least of the order $n^{2 \alpha}$. In other words, nothing is lost with the integral approximation (iii). If one were to improve Lemma~\ref{bound_AN} to the exponent $1$, then the integral approximation adds a $\ln(n)$ factor.
Summarising, the best possible upper bound for the current approach with $K = [-1,1]$ and $\tau = 1$ is $O(n^2)$ (or $O(n^2 \ln(n))$).

Finally, let us mention that the constant $C_\tau$ in Theorem~\ref{main_thm} is by no means optimal. Yet, we have tried to be reasonably sharp with our approximations. By the current formula, it can be checked that $C_1\leq 10^4$. With additional, but rather cumbersome, bookkeeping, $C_1$ could likely be brought down to about $10^3$.

\section{Proof of the main result}
Recall that $K$ is a regular compact subset of $\C$, of diameter $d$. We let $g$ be its Green function; we refer to the book~\cite{SaffTotikBook1997} for the necessary material related to potential theory.

For $t >0$, we define 
\[\forall x \in K, \quad \rho_{t}(x) = \mathrm{dist}\big(x, \{g = t\}\big),\]
but we will mostly be using $\my$, for $n \in \N^*$. 
With the notations of~\cite{Explicit_Lebesgue_Interval2022}, letting $G : \delta \mapsto \textstyle \max \{g(z),\, \mathrm{dist}(z,K) \leq 2 \delta\}$, we have $\textstyle \inf_{x \in K} \rho_t(x) = 2 G^{-1}(t)$ for all $t>0$, where $G^{-1} : t \mapsto \inf \{\delta>0, \,G(\delta) \geq t\}$.\footnote{This generalised inverse happens to boil down to a genuine inverse for finite unions of intervals.}

Let us collect a few basic but useful results regarding this function: 
\begin{itemize}
\item as a distance function to a closed set, $\rho_{t}$ is a $1$-Lipschitz function for any $t>0$,
\item for two regular compact sets $K_1 \subset K_2$, the fact that the corresponding Green functions satisfy $g_{K_1} \geq g_{K_2}$ implies $\rho_{t,K_1}(x)  \leq \rho_{t,K_2}(x)$ for all $x \in K_1, \, t>0$,
\item for $n \in \N^*$, $\{g = \tfrac{1}{n}\} \subset \{g \leq 1\}$ so that $\my(x)  \leq \mathrm{diam}(\{g \leq 1\})$ for all $x \in K$. Hence, the sequence $(\|\my\|)_{n \in \N^*}$ is bounded. 
\end{itemize}


Let us start with the estimate comparing the spacing $\e_{k,n}^\bx$ to the function $\my$ at the point~$x_k$. This results holds without further assumption on $K$, which is only assumed to be a regular compact subset of $\C$.
\begin{lmm}
\label{est_spacing}
For all $0 \leq k \leq n-2$, letting $c_\tau:= \tfrac{\tau}{1+e}$, there holds
\[\forall \bx \in \leja, \quad \e_{k,n}^\bx \geq c_\tau \, \my(x_k).\]
\end{lmm}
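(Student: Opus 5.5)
The plan is to fix $0\le k\le n-2$ and an index $j$ with $k<j<n$, and to bound $\delta:=|x_j-x_k|$ from below. I would combine the $\tau$-Leja property at step $j$ with the Bernstein--Walsh inequality and the maximum principle, applied on the sublevel domain $\Omega:=\{g<1/n\}$. Write $\rho:=\my(x_k)$.

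\emph{Step 1 (setup).} Let $P(z):=\prod_{i=0}^{j-1}(z-x_i)$, which has degree $j\le n-1$, and let $Q(z):=P(z)/(z-x_k)$. Since $k<j$, $x_k$ is a root of $P$, so $Q$ is a polynomial. The definition of $\leja$ at index $j$ gives
\[
\tau\|P\|\le |P(x_j)| = \delta\,|Q(x_j)| \le \delta\,\|Q\| .
\]
Note that $\|P\|>0$ because $K$ is infinite. It therefore suffices to show $\|Q\|\le e\,\|P\|/\rho$.

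\emph{Step 2 (bounding $\|Q\|$ via the level set).} Since $K$ is regular, $g$ is continuous on $\C$, vanishes on $K$, and tends to infinity at infinity. Hence $\Omega$ is a bounded open set containing $K$, and $\partial\Omega\subset\{g=1/n\}$. By the maximum principle applied to the polynomial $Q$ on $\overline{\Omega}$,
\[
\|Q\|\le \sup_{w\in\{g=1/n\}}|Q(w)|.
\]
For $w$ on this level set, the definition of $\rho$ gives $|w-x_k|\ge \rho$. The Bernstein--Walsh inequality gives $|P(w)|\le \|P\|\,e^{j g(w)}=\|P\|e^{j/n}\le e\|P\|$. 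Therefore $|Q(w)|\le e\|P\|/\rho$ on the level set.

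\emph{Step 3 (conclusion).} Combining the two steps yields $\tau\|P\|\le \delta\, e\|P\|/\rho$, that is, $\delta\ge (\tau/e)\,\rho$. Taking the infimum over $k<j<n$ gives $\e_{k,n}^\bx\ge \tfrac{\tau}{e}\my(x_k)\ge \tfrac{\tau}{1+e}\my(x_k)$, which is the claim. If one prefers to avoid the maximum principle on a possibly nonconnected $\Omega$, there is an alternative: bound $|Q(x)|$ for $x\in K$ away from $x_k$ directly by $|P(x)|/|x-x_k|$, and treat $x$ near $x_k$ by the above. This case split is presumably where the constant $1+e$ in the statement comes from.

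The only delicate point is Step 2. It requires that $\overline\Omega$ be compact with boundary in the level set $\{g=1/n\}$, so that the maximum principle applies componentwise. This holds because $g$ is continuous, which uses the regularity of $K$. It also requires that the level set is nonempty, so that $\rho$ is finite. Everything else is a direct computation.
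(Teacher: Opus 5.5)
Your proof is correct, and it takes a genuinely different route from the paper's.

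\textbf{The paper's route.} The paper works locally around $x_k$. For $r<\rho$, the disc $D(x_k,r)$ lies in $\{g\le 1/n\}$, so Bernstein--Walsh gives $|P_j|\le e\|P_j\|$ on it. Assuming $\varepsilon=|x_j-x_k|<r$, a Cauchy estimate on the shrunken discs $D(z,r-\varepsilon)$, $z\in[x_k,x_j]$, bounds $|P_j'|$ by $e\|P_j\|/(r-\varepsilon)$. The mean-value inequality together with $P_j(x_k)=0$ then gives $\varepsilon\ge \tau r/(\tau+e)$. Finally, letting $r\to\rho$ and using $\tau\le 1$ produces the constant $\tau/(1+e)$.

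\textbf{Your route.} You divide out the root, $Q=P/(z-x_k)$. You then apply the maximum principle componentwise on the bounded open set $\{g<1/n\}$, whose boundary lies in $\{g=1/n\}$ and hence at distance at least $\rho$ from $x_k$. Your justifications hold: continuity of $g$ from regularity, $\partial\Omega\subset\{g=1/n\}$, boundaries of components contained in $\partial\Omega$, and $e^{j/n}\le e$.

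\textbf{What each approach buys.} Your argument avoids derivatives, the contradiction hypothesis $\varepsilon<r$, and the limit in $r$. It also yields the sharper constant $\tau/e$. The paper's loss comes from the factor $r-\varepsilon$ in the Cauchy estimate, not from a case split, so the fallback you sketch at the end is unnecessary. Both arguments rely only on regularity of $K$ and Bernstein--Walsh, and both hold for any regular compact $K\subset\C$. Using $\tau/e$ in place of $c_\tau$ in Lemma~\ref{sum_to_int} and in the proof of Theorem~\ref{main_thm} would slightly improve the constant $C_\tau$, but not the exponent $2\alpha_\tau$.
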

\begin{proof} Let  \(0\le k<j\le n-1\) be fixed, and define $P_j(x):= \textstyle \prod_{r=0}^{j-1}(x-x_r)$. By the $\tau$-Leja property, we have $|P_j(x_j)|\geq \tau\|P_j\|$.

 Let $\rho:=\my(x_k)$, and choose any \(r\) with \(0<r<\rho\). 
By definition of \(\rho\), \[ \forall z \in D(x_k, r), \quad g(z)\le \tfrac1n.\]
 By the Bernstein--Walsh lemma, since $P_j$ has degree $j$, 
 \[ \forall z \in D(x_k, r), \quad |P_j(z)| \le \|P_j\| \, e^{j g(z)} \le  e^{j/n}  \|P_j\| \leq e  \|P_j\|. \] 
 For $\e:=|x_j-x_k|$, we now prove  $\e \ge \tfrac{\tau r}{\tau+e}$. Suppose to the contrary, that $\e<\tfrac{\tau r}{\tau+e}$; in particular, $\e<r$. For every point $z \in [x_k, x_j]$, the disk \(D(z,r-\e)\) is contained in \(D(x_k,r)\). Hence Cauchy's estimate gives \[ |P_j'(z)| \le \frac{e \|P_j\|}{r-\e}, \] for every \(z\) on that segment. 
 Therefore, thanks to $P_j(x_k)=0$, we obtain \[ \tau  \|P_j\| \leq  |P_j(x_j)-P_j(x_k)| \le  \e \sup_{z\in[x_k,x_j]} |P_j'(z)| \le \e \,\frac{e  \|P_j\|}{r-\e}. \] Since \( \|P_j\|>0\), this implies $\tau \le  \tfrac{\e e}{r-\e}$, which contradicts the assumption. Hence the claim is proved. Since this holds for every \(0<r<\rho\), letting \(r\) tend to $\rho$ and using $\tau \leq 1$ yields \[ \e = |x_j-x_k| \ge \tfrac{\tau}{\tau+e} \, \my(x_k)\geq  \tfrac{\tau}{1+e}\,\my(x_k) \] This completes the proof. \end{proof}

We now turn to estimating the function $\rho_t$ from below for a finite union of intervals, while providing an explicit formula when $K = [-1,1]$.


\begin{lmm}
\label{estimate_rho}
Assume that $K$ is a finite union of intervals, $K = \cup_{j=1}^r [a_j, b_j]$ with $a_1 < b_1 < a_2 < \ldots < b_{r-1}< a_r < b_r$.
Then, there exists $c_j> 0$ such that
\begin{equation}
\label{lower_bound_rho}
\forall t>0, \; \forall x \in [a_j, b_j], \quad \rho_{t}(x) \geq c_j \max \Big(t \sqrt{\mathrm{dist}(x, \{a_j, b_j\})}, t^2 \Big).
\end{equation}

In the case where $K = [-1,1]$, we have the explicit
\begin{equation}
\label{explicit_rho}
\forall t > 0, \; \forall x \in [-1,1], \quad \rho_t(x) = \begin{cases}
\sinh(t) \sqrt{1-x^2}&  \text{ if } |x| < \cosh(t)^{-1}\\
\cosh(t) -|x|  & \text {else} 
\end{cases}.
\end{equation}
\end{lmm}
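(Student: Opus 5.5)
For the explicit formula on $[-1,1]$, I will use the Joukowski map. The Green function of $[-1,1]$ is $g(z)=\ln|z+\sqrt{z^2-1}|$. Hence the level set $\{g=t\}$ is the ellipse
\[E_t=\{\cosh(t)\cos\phi+i\sinh(t)\sin\phi,\ \phi\in[0,2\pi)\},\]
with foci $\pm1$. For $x\in[-1,1]$, $\rho_t(x)$ is the distance from $x$ to $E_t$. I will minimise $f(\phi)=(\cosh t\cos\phi-x)^2+\sinh^2 t\,\sin^2\phi$ over $\phi$. Since $\cosh^2-\sinh^2=1$, we get $f(\phi)=\cos^2\phi-2x\cosh t\cos\phi+x^2+\sinh^2t$, a quadratic in $u=\cos\phi\in[-1,1]$. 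Its unconstrained minimiser is $u^*=x\cosh t$.
\begin{itemize}
\item If $|x|<\cosh(t)^{-1}$, then $u^*$ is admissible and the minimum is $x^2+\sinh^2t-x^2\cosh^2t=\sinh^2t\,(1-x^2)$.
\item Otherwise the minimum is attained at $u=\operatorname{sign}(x)$, giving $(\cosh t-|x|)^2$.
\end{itemize}
Taking square roots yields~\eqref{explicit_rho}.

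For the general lower bound~\eqref{lower_bound_rho}, I will use the monotonicity property listed before Lemma~\ref{est_spacing}. Since $[a_j,b_j]\subset K$, we have $\rho_{t,K}(x)\ge\rho_{t,[a_j,b_j]}(x)$ for $x\in[a_j,b_j]$. After an affine change of variables, the Green function of $[a_j,b_j]$ is that of $[-1,1]$ composed with $x\mapsto(2x-a_j-b_j)/(b_j-a_j)$, and distances scale by $(b_j-a_j)/2$. So it suffices to prove~\eqref{lower_bound_rho} for $K=[-1,1]$ with some absolute constant, then rescale (with $\mathrm{dist}(x,\{a_j,b_j\})$ corresponding to $\tfrac{b_j-a_j}{2}(1-|x|)$).

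On $[-1,1]$ I will check both lower bounds in both regimes of~\eqref{explicit_rho}, using $\sinh t\ge t$ and $\cosh t-1\ge t^2/2$.
\begin{itemize}
\item In the interior regime, $\sqrt{1-x^2}\ge\sqrt{1-|x|}$ gives $\rho_t\ge t\sqrt{1-|x|}$. Also $1-x^2\ge 1-\cosh^{-2}t=\tanh^2t$, so $\rho_t\ge\sinh t\tanh t$. For $t\le1$ this is $\gtrsim t^2$. For $t\ge1$, use $\rho_t\ge \sinh t\tanh t\ge c$ together with the bounds $\rho_t$ versus $t^2$: since $\rho_t$ grows like $e^t$, $t^2\lesssim\rho_t$.
\item In the endpoint regime, $\rho_t=\cosh t-|x|\ge\cosh t-1\ge t^2/2$. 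Moreover $\cosh t-|x|=(\cosh t-1)+(1-|x|)\ge \tfrac{t^2}{2}+s$ with $s=1-|x|$, and AM--GM gives $\tfrac{t^2}{2}+s\ge\sqrt2\, t\sqrt s$.
\end{itemize}
So both bounds hold with an absolute constant.

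The main obstacle is bookkeeping rather than an idea: the large-$t$ regime, where $t^2$ outgrows $\sinh t\tanh t$ only for moderate $t$. I will handle it by simply noting that $\inf_{t>0}\sinh t\tanh t/t^2>0$, since the ratio tends to $1$ at $0$ and to $\infty$ at $\infty$, and it is continuous and positive in between.
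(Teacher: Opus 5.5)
Your proof is correct and follows the paper's route. It uses the same ellipse parametrisation and minimisation of the quadratic in $u=\cos\phi$ for the explicit formula, and the same reduction to $[-1,1]$ via the monotonicity $\rho_{t,K}\ge\rho_{t,[a_j,b_j]}$ plus an affine rescaling. The one difference is that you check both lower bounds in each regime: the AM--GM bound $\cosh t-|x|\ge\sqrt2\,t\sqrt{1-|x|}$ near the endpoints, and $\sinh t\tanh t\gtrsim t^2$ in the interior. The paper verifies only $t\sqrt{\mathrm{dist}}$ in the interior and $t^2/2$ near the endpoints, so your version actually justifies the maximum for all $t>0$, including the interior regime when $t>1$, where the paper's single bound does not suffice.
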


We plot $n\, \my$ for different values of $n$ in Figure~1. With the $n$ rescaling, we have uniform convergence (in $O(\tfrac{1}{n})$) towards $x \mapsto \sqrt{1-x^2}$.

\begin{figure}[h!]
\label{figure_rho}
\centering
\includegraphics[width=11cm]{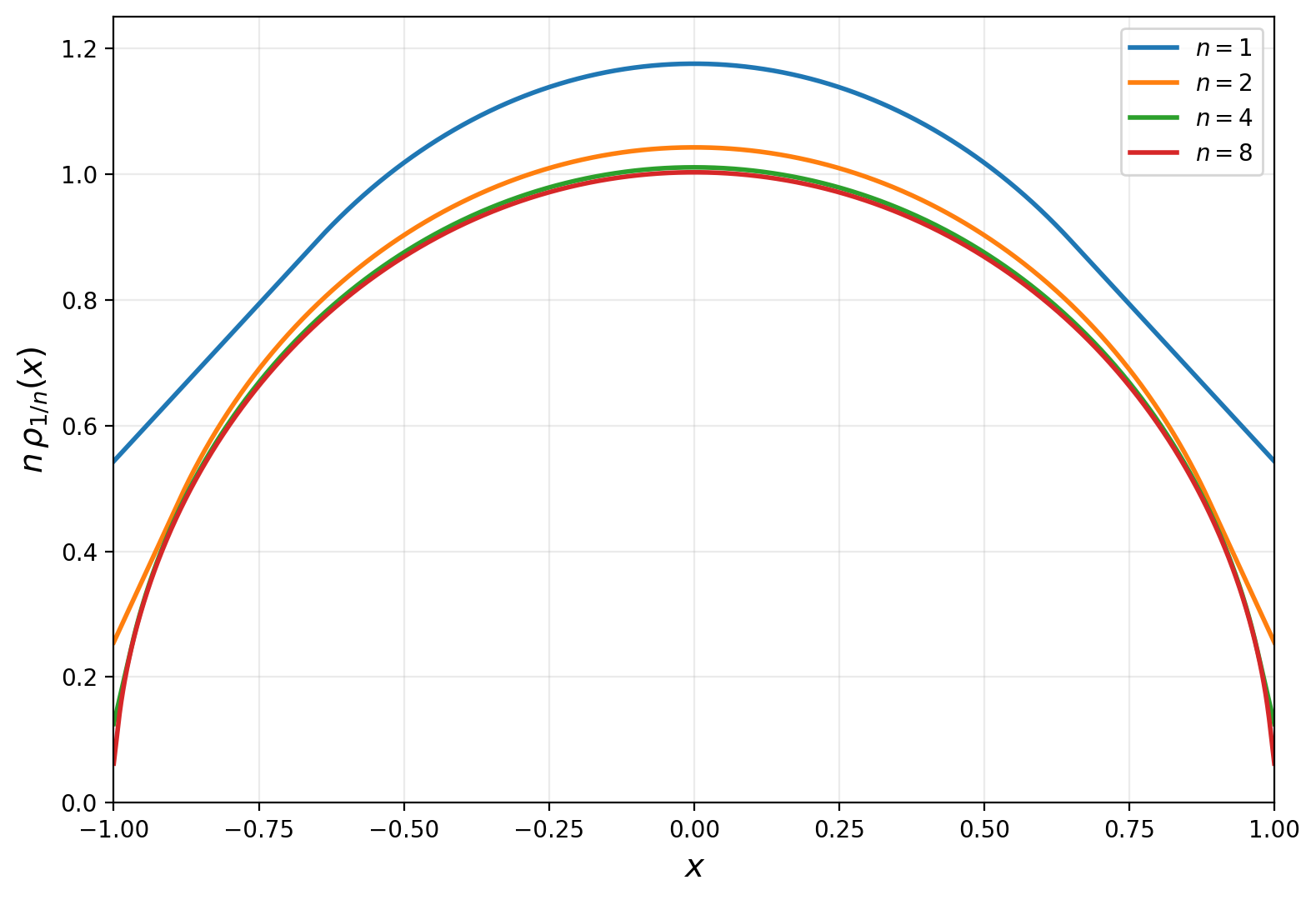}
\caption{Functions $x \mapsto n \, \my(x)$ for $K = [-1,1]$ and $n \in \{1, 2, 4, 8\}$.}
\end{figure}

\begin{proof}
Let $t>0$ be fixed. 

\textit{Explicit computation for $K=[-1,1]$.}
We start with the case $K = [-1,1]$, for which it is well known that the Green function is given by $z \mapsto  \ln(|z + \sqrt{z^2-1}|)$ with the branch chosen so that $\sqrt{z^2-1} \sim z$ as $z \to \infty$~\cite[Example~3.6]{Saff2010}. Let $t>0$, and set $c_t:=\cosh t$, $s_t = \sinh t$.
The level line \(\{g=t\}\) is thus an ellipse, namely
\[
\{g=t\}
=
\left\{
c_t \cos\theta+i s_t\sin\theta, \; 0\le \theta<2\pi
\right\}.
\]
Let $x \in [-1,1]$, by symmetry we may assume \(0\le x\le1\). 
Putting \(u=\cos\theta\), we obtain
\[
\rho_t^2(x)
=
\min_{-1\le u\le 1} \;
(x-c_t u)^2+s_t^2(1-u^2).
\]
Since \(c_t^2-s_t^2=1\), the expression of interest reads
\[
(x- c_t u)^2+s_t^2(1-u^2)
=
u^2-2 c_t xu+x^2+s_t^2.
\]
If $c_t x \in [0,1)$, $u = c_t x$ cancels the derivative of this convex function. 
Hence, there are two cases. First, if \(c_t x<1\), then it can be checked that the function is minimised at $u = c_t x$ (and not at $u = \pm 1$), which leads to $\rho_t^2(x) = s_t^2 (1-x^2)$, as wanted. 

Second, if \(c_t x  \geq 1\), then the minimum is attained at \(u=1\), and a direct computation leads to the expected $\rho_t^2(x) = (c_t-x)^2$.

\textit{Lower bound~\eqref{lower_bound_rho} for $K=[-1,1]$.}
Using $s_t \geq t$, we find for $|x| < c_t^{-1}$, 
\[\rho_t(x) \geq t \sqrt{1-x^2}  \geq t \sqrt{1-|x|} = t \, \sqrt{\mathrm{dist}(x, \{-1, 1\})}.\]
 For $|x| \geq c_t^{-1}$, 
\[\rho_t(x) \geq c_t - 1 \geq \tfrac{t^2}{2},\]
so the expected estimate holds (with the constant $c= \tfrac{1}{2}$).
Of course, such an estimate then holds for any interval $[a,b]$ upon changing the constant.

\textit{General lower bound~\eqref{lower_bound_rho}.}
Now assume that $K = \cup_{j=1}^r [a_j, b_j]$, with $a_1 < b_1 < a_2 < \ldots < b_{r-1} < a_r < b_r$.

For $j \in \{1, \ldots, r\}$, since $[a_j,b_j] \subset K$, it follows that $\rho_{t,K}(x) \geq \rho_{t, [a_j,b_j]}(x)$ for all $x \in [a_j, b_j]$.
Hence, by the previous estimate we find for all such $x$ and some $c_j > 0$
\[\rho_t(x) \geq c_j \max \Big(t \sqrt{\mathrm{dist}(x, \{a_j, b_j\})}, t^2 \Big),\]
and the result is proved.
\end{proof}

Next, we move on to an abstract lemma that compares $\textstyle \sum_{k=0}^{n-1} \rho^{-\alpha}(x_k)$ to $\textstyle \int_K \rho^{-(\alpha+1)}(x) \,dx$ for all $\alpha >0$, for a given function $\rho$ that will eventually be taken to be $\my$.
\begin{lmm}
\label{sum_to_int}
Let $K \subset \R$ be a measurable set. 
Let $x_0, \ldots, x_{n-1} \in K$ and  $\rho : K \to (0,+\infty)$ be $1$-Lipschitz function such that
\begin{itemize}
\item there exists $c>0$ such that for all $0 \leq k < j \leq n-1$, $|x_k -x_j| \geq c\,  \rho(x_k)$
\item letting $\eta :=\tfrac{c}{2+c}$,  there exists $\e>0$ such that \[\forall k \in \{0, \ldots, n-1\},\quad
\left|
K\cap (x_k-\eta\rho(x_k),x_k+\eta\rho(x_k))
\right|
\ge \e\,\eta\,\rho(x_k).
\]
\end{itemize}
Then, for any $\alpha > 0$, putting $M := \tfrac{2+ c}{\e\, c} (\tfrac{2+2c}{2 + c})^{\alpha+1}$, there holds
\[\sum_{k=0}^{n-1} \rho^{- \alpha}(x_k) \leq M \int_K \rho^{-(\alpha+1)}(x)\,dx.\]
\end{lmm}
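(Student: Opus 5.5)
Our approach is a packing argument. Around each point $x_k$ we place the interval $I_k := (x_k-\eta\rho(x_k),x_k+\eta\rho(x_k))$, with $\eta = \tfrac{c}{2+c}$. We aim for three facts: (a) the $I_k$ are pairwise disjoint; (b) on $I_k$ the function $\rho$ is comparable to $\rho(x_k)$ from above; (c) each $I_k \cap K$ has measure at least $\e\eta\rho(x_k)$. Given these, the comparison of the sum with the integral follows by integrating $\rho^{-(\alpha+1)}$ over each $I_k\cap K$.

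\emph{Step 1: upper bound for $\rho$ on $I_k$.} Since $\rho$ is $1$-Lipschitz, every $x\in I_k\cap K$ satisfies $\rho(x)\le (1+\eta)\rho(x_k)$. Here $1+\eta = \tfrac{2+2c}{2+c}$, which explains the factor in $M$. Hence $\rho^{-(\alpha+1)}(x) \ge (1+\eta)^{-(\alpha+1)}\rho^{-(\alpha+1)}(x_k)$ on $I_k\cap K$. Combined with the measure assumption, this gives
\[
\int_{I_k\cap K}\rho^{-(\alpha+1)}(x)\,dx \ \ge\ \e\,\eta\,(1+\eta)^{-(\alpha+1)}\,\rho^{-\alpha}(x_k).
\]
Inverting, $\rho^{-\alpha}(x_k)\le \tfrac{2+c}{\e c}\big(\tfrac{2+2c}{2+c}\big)^{\alpha+1}\int_{I_k\cap K}\rho^{-(\alpha+1)}$, which is $M\int_{I_k\cap K}\rho^{-(\alpha+1)}$.

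\emph{Step 2: disjointness.} This is the only delicate point, because the separation hypothesis is asymmetric: it only gives $|x_k-x_j|\ge c\rho(x_k)$ for $k<j$, i.e.\ in terms of the earlier index. Take $k<j$ and suppose $I_k\cap I_j\neq\emptyset$. Then $|x_k-x_j| < \eta(\rho(x_k)+\rho(x_j))$. By the Lipschitz property, $\rho(x_j)\le \rho(x_k)+|x_k-x_j|$, so
\[
(1-\eta)|x_k-x_j| < 2\eta\rho(x_k).
\]
With $\eta=\tfrac{c}{2+c}$ we have $1-\eta=\tfrac{2}{2+c}$, and this becomes $|x_k-x_j|<c\rho(x_k)$, contradicting the hypothesis. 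So the $I_k$ are pairwise disjoint. The value of $\eta$ is exactly tuned so that this works.

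\emph{Step 3: summation.} Summing the bound of Step 1 over $k$ and using the disjointness from Step 2 yields
\[
\sum_k \rho^{-\alpha}(x_k)\le M\sum_k\int_{I_k\cap K}\rho^{-(\alpha+1)}\le M\int_K\rho^{-(\alpha+1)}.
\]
Positivity of the integrand is what allows the last inequality. I expect Step 2 to be the main obstacle, specifically handling the asymmetric separation hypothesis through the Lipschitz bound on $\rho(x_j)$.
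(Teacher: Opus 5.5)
Your proposal is correct and follows essentially the same packing argument as the paper: you use the disjoint intervals $I_k$ of radius $\eta\rho(x_k)$, the Lipschitz bound $\rho\le(1+\eta)\rho(x_k)$ on $I_k$, and then sum over $k$. The only cosmetic difference is in the disjointness step, where you substitute the Lipschitz bound directly into the overlap inequality to get $(1-\eta)|x_k-x_j|<2\eta\rho(x_k)$, whereas the paper first derives $\rho(x_j)<\tfrac{1+\eta}{1-\eta}\rho(x_k)$ and then substitutes; the two computations are equivalent.
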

\begin{rmrk}
\label{rmk_eps}
If $K$ is a finite union of intervals, note that the existence of $\e$ is always granted by choosing it small enough. We shall later on apply this result to $\my$, while making sure that $\e$ can be chosen independently of $n$. 
\end{rmrk}
\begin{proof}
Set $\rho_k:=\rho(x_k)$, as well as the open intervals
\[
I_k := \bigl(x_k-\eta\rho_k,\ x_k+\eta\rho_k\bigr).
\]

Let us first show that the intervals \(I_0,\dots,I_{n-1}\) are pairwise disjoint.
Suppose that \(I_k\cap I_j\ne\varnothing\) for some \(k<j\). Since the
intervals are open, $|x_k-x_j|<\eta(\rho_k+\rho_j)$. By the \(1\)-Lipschitz property $\rho_j\le \rho_k+|x_k-x_j|$, hence $(1-\eta)\rho_j<(1+\eta)\rho_k$.
Therefore $\rho_j<\tfrac{1+\eta}{1-\eta}\rho_k$.
Substituting this into the overlap inequality gives
\[
|x_k-x_j|
<
\eta\big(1+\tfrac{1+\eta}{1-\eta}\big)\rho_k
=
\tfrac{2\eta}{1-\eta}\rho_k = c \rho_k,
\]
contradicting the assumed separation. Hence the intervals $I_0, \ldots, I_{n-1}$ are pairwise disjoint, and so are the intervals $K \cap I_0, \ldots, K \cap I_{n-1}$.

Now, for $x \in I_k$, $\rho(x) \leq  \rho_k + |x - x_k| \leq \rho_k +  \eta \rho_k$, which after exponentiating to the power $-(\alpha + 1)$ and integrating on $K \cap I_k$, leads to 
\[|K \cap I_k| \, \rho_k^{-(\alpha+1)} \leq (1+\eta)^{\alpha+1}\int_{K \cap I_k} \rho^{-(\alpha+1)}(x) \,dx. \]
Hence, using $|K \cap I_k| \geq  \e \eta \rho_k$, we have \[ \rho^{-\alpha} (x_k)= \rho_k^{-\alpha}  \leq (\e \,\eta)^{-1} (1+\eta)^{\alpha+1} \int_{K \cap I_k} \rho^{-(\alpha+1)}(x) \,dx.\]
Summing over $k \in \{0,\ldots, n-1\}$ and using the disjointness of the intervals $I_k$, the announced bound follows.
\end{proof}

We finish this series of Lemmas by estimating the integral of $\my^{-\alpha}$ over $K$.
\begin{lmm}
\label{estimate_int}
Assume that $K$ is a finite union of intervals, and let $\alpha \geq 1$.
Then, \[\int_K \my^{-(\alpha+1)}(x)\,dx =  \begin{cases} O(n^{2 \alpha}) & \text{ if } \alpha> 1 \\
O(n^2 \ln(n)) & \text{ if } \alpha = 1\end{cases}.\]
For $K = [-1,1]$, one has the explicit upper bound
\[\forall n \in \N^*, \quad \int_K \my^{-(\alpha+1)}(x)\,dx \leq \begin{cases} \big(\tfrac{2^{\alpha+1}}{\alpha} +\tfrac{\pi^\alpha}{\alpha-1}\big) \, n^{2 \alpha} & \text{ if } \alpha> 1 \\
(4 + \pi \ln(\pi n)) \, n^2& \text{ if } \alpha = 1\end{cases}.\]
\end{lmm}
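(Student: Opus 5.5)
We treat the case $K=[-1,1]$ first, where the explicit formula~\eqref{explicit_rho} of Lemma~\ref{estimate_rho} is available, and then deduce the general case from the lower bound~\eqref{lower_bound_rho}. Put $t=1/n$, $c_t=\cosh t$, $s_t=\sinh t$. By symmetry the integral equals twice the integral over $[0,1]$. We split $[0,1]$ at $x_t:=c_t^{-1}$ into an interior region $[0,x_t)$, where $\rho_t(x)=s_t\sqrt{1-x^2}$, and an endpoint region $[x_t,1]$, where $\rho_t(x)=c_t-x$.

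\emph{Endpoint region.} Here $c_t-x\ge c_t-1$, and a direct integration gives
\[\int_{x_t}^1 (c_t-x)^{-(\alpha+1)}\,dx\le \tfrac{1}{\alpha}(c_t-1)^{-\alpha}.\]
Using $c_t-1\ge t^2/2$, this is at most $\tfrac{2^\alpha}{\alpha}n^{2\alpha}$. After the factor $2$ from symmetry, it becomes $\tfrac{2^{\alpha+1}}{\alpha}n^{2\alpha}$, which matches the first term of the claimed bound.

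\emph{Interior region.} We substitute $x=\cos\phi$, so that $\sqrt{1-x^2}=\sin\phi$ and $dx=\sin\phi\,d\phi$. This gives
\[s_t^{-(\alpha+1)}\int_{\phi_t}^{\pi/2}(\sin\phi)^{-\alpha}\,d\phi,\qquad \cos\phi_t=x_t,\]
so $\phi_t$ is bounded below in terms of $t$. We use $\sin\phi\ge 2\phi/\pi$ and $s_t\ge t$. For $\alpha>1$ the integral is then bounded by $(\pi/2)^\alpha\phi_t^{1-\alpha}/(\alpha-1)$, so we need a lower bound on $\phi_t$.

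From $1-\cos\phi_t=1-1/c_t=(c_t-1)/c_t$ and $1-\cos\phi\le\phi^2/2$, we get $\phi_t^2\ge 2(c_t-1)/c_t$. The step I expect to be fiddliest is making this explicit with the right constant. Using $c_t-1\ge t^2/2$, we have $\phi_t\ge t/\sqrt{c_t}$. Alternatively, one can use $\tan$-type bounds such as $\sin\phi_t=s_t/c_t$, which gives $\phi_t\ge \tanh t$. Either way we get $\phi_t\gtrsim t$, and hence a contribution of $O(t^{-(\alpha+1)}t^{1-\alpha})=O(n^{2\alpha})$.

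Matching the stated constant $\pi^\alpha/(\alpha-1)$ (after doubling) requires choosing the lower bound on $\phi_t$ carefully, e.g.\ $\phi_t\ge \sin\phi_t=\tanh t\ge t/c_t$. The leftover factors $c_t^{\alpha-1}$ and $(t/s_t)^{\alpha+1}$ must be absorbed, which is possible since $\tanh(t)/t$ and $\sinh(t)/t$ are controlled for $t\le1$. If this proves too tight, a cruder constant still gives the $O(n^{2\alpha})$ statement. For $\alpha=1$ the $\phi$-integral is instead $\int_{\phi_t}^{\pi/2}\tfrac{\pi}{2\phi}\,d\phi=\tfrac{\pi}{2}\ln\tfrac{\pi}{2\phi_t}=O(\ln n)$, which yields the $(4+\pi\ln(\pi n))n^2$ bound.

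\emph{General finite union.} For each interval $[a_j,b_j]$ we use~\eqref{lower_bound_rho} with $t=1/n$, writing $\delta$ for the distance to the nearest endpoint:
\[\rho_{1/n}(x)^{-(\alpha+1)}\le c_j^{-(\alpha+1)}\min\big((\sqrt{\delta}/n)^{-(\alpha+1)},\,n^{2(\alpha+1)}\big).\]
The two bounds cross at $\delta=n^{-2}$. On $\delta\le n^{-2}$ we bound by $n^{2\alpha+2}\cdot n^{-2}=n^{2\alpha}$. On $\delta\ge n^{-2}$ we integrate $n^{\alpha+1}\delta^{-(\alpha+1)/2}$. For $\alpha>1$ this gives $n^{\alpha+1}(n^{-2})^{(1-\alpha)/2}/\tfrac{\alpha-1}{2}=O(n^{2\alpha})$, and for $\alpha=1$ it gives $n^2\ln(\cdot)=O(n^2\ln n)$. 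Summing over the finitely many intervals concludes.
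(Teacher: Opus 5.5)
Your proposal is correct and follows essentially the same route as the paper: the same split at $\cosh(1/n)^{-1}$ for $K=[-1,1]$ (and at distance $n^{-2}$ from the endpoints in the general case), the substitution $x=\cos\phi$ with $\sin\phi\ge 2\phi/\pi$ and $s_t\ge t$, and the lower bound $\phi_t\ge\sin\phi_t=\tanh t$. The constant you hedge on does close: $\tanh(1/n)\ge \tfrac{1}{n\cosh 1}\ge\tfrac{1}{2n}$ gives $\phi_t^{1-\alpha}\le(2n)^{\alpha-1}$, so the doubled interior term is exactly $2(\pi/2)^\alpha(2n)^{\alpha-1}n^{\alpha+1}/(\alpha-1)=\pi^\alpha n^{2\alpha}/(\alpha-1)$, and for $\alpha=1$ the same bound gives $\ln\tfrac{\pi}{2\phi_t}\le\ln(\pi n)$.
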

\begin{proof}
\textit{General case.}
Write $K = \cup_{j=1}^r [a_j, b_j]$, with $r \geq 1$, $a_1 < b_1 < a_2 < \ldots < b_{r-1} < a_r < b_r$.
Fix $j \in \{1, \ldots, r\}$, we show that the integral over $[a_j,b_j]$ is $O(n^{2\alpha})$ if $\alpha>1$ and $O(n^2 \ln(n))$ if $\alpha =1$. Using the estimate~\eqref{lower_bound_rho},
we have
\begin{align*}
\int_{a_j}^{b_j} \my^{-(\alpha+1)}(x)\,dx  & \leq c_j^{-(\alpha+1)} \int_{a_j}^{b_j}\max \Big(\tfrac{1}{n} \sqrt{\mathrm{dist}(x, \{a_j, b_j\})},\, \tfrac{1}{n^2} \Big)^{-(\alpha+1)}\,dx.
\end{align*}
The contribution of the latter integral near the endpoints writes, for $n$ large enough and by symmetry
\begin{align*}
2  \int_{a_j}^{a_j + \tfrac{1}{n^2}}\max \Big(\tfrac{1}{n} \sqrt{\mathrm{dist}(x, \{a_j, b_j\})},\, \tfrac{1}{n^2} \Big)^{-(\alpha+1)}\,dx = 2  \int_{a_j}^{a_j + \tfrac{1}{n^2}} (\tfrac{1}{n^2})^{-(\alpha+1)}   = 2 n^{2 \alpha}.
\end{align*}
For $\alpha>1$, the other contribution reads, again for $n$ large enough and by symmetry,
\begin{align*}
2 ( \tfrac{1}{n})^{-(\alpha+1)} \int_{a_j + \tfrac{1}{n^2}}^{\frac{a_j+b_j}{2}}& \Big(\sqrt{\min(x-a_j, b_j -x)}\Big)^{-(\alpha+1)} \, dx  =2 n^{\alpha+1}  \int_{a_j + \tfrac{1}{n^2}}^{\tfrac{a_j+b_j}{2}} (x-a_j)^{-(\alpha+1)/2} \, dx  \\
& = 2  \tfrac{2}{\alpha-1} n^{\alpha+1} \Big((\tfrac{1}{n^2})^{\tfrac{1-\alpha}{2}} - (\tfrac{b_j-a_j}{2})^{\tfrac{1-\alpha}{2}} \Big) = O(n^{2\alpha}).
\end{align*}
For $\alpha = 1$, the integral is bounded above by $2 n^2 \ln(n^2)$ which is $O(n^2 \ln(n))$.

\textit{Explicit computations for $K = [-1,1]$.}
In the case where $K = [-1,1]$, to simplify notation, let $s_n := \sinh(\tfrac{1}{n})$, $c_n := \cosh(\tfrac{1}{n})$. Hence,
\[\int_K \my^{-(\alpha+1)} (x)\,dx = 2 \int_{c_n^{-1}}^1   (c_n - x)^{-(\alpha+1)}\,dx + 2 \, s^{-(\alpha + 1)}_n \int_{0}^{c_n^{-1}} (1-x^2)^{-(\alpha+1)/2} \,dx .\]

The first integral term rewrites explicitly
\[ \int_{c_n^{-1}}^1   (c_n - x)^{-(\alpha+1)}\,dx = \tfrac{1}{\alpha} \big((c_n - 1)^{-\alpha} - (c_n  - c_n^{-1})^{-\alpha}\big). \]
Using $c_n - 1 \geq \tfrac{1}{2n^2}$, we end up with
\[ 2 \int_{c_n^{-1}}^1   (c_n - x)^{-(\alpha+1)}\,dx  \leq \tfrac{2^{\alpha+1}}{\alpha} n^{2 \alpha}.\]

In the second integral, we set $x =\cos(\theta)$, $\theta_n  = \mathrm{arccos}(c_n^{-1})$ and use $\sin(\theta) \geq \tfrac{2}{\pi}\theta$ so that, for $\alpha>1$,
\[\int_{0}^{c_n^{-1}} (1-x^2)^{-(\alpha+1)/2} \,dx = \int_{\theta_n}^{\tfrac{\pi}{2}} \sin^{-\alpha}(\theta) \, d\theta \leq (\tfrac{\pi}{2})^{\alpha} \int_{\theta_n}^{\tfrac{\pi}{2}} \theta^{-\alpha}\,d\theta \leq   \tfrac{1}{\alpha-1}(\tfrac{\pi}{2})^{\alpha} \theta_n^{-\alpha+1}.\]
If $\alpha = 1$, the upper bound becomes $\tfrac{\pi}{2} \ln(\tfrac{\pi}{2 \theta_n})$.  

Going back to the full second term, we use $s_n \geq \tfrac{1}{n}$ and $\theta_n \geq \tfrac{1}{2n}$.\footnote{Indeed, notice that $\theta_n \geq \sin(\theta_n) = \sqrt{1- c_n^{-2}} = \tanh(\tfrac{1}{n})  = \tfrac{s_n}{c_n} \geq \tfrac{1/n}{\cosh(1)} \geq \tfrac{1}{2n}$.}
Altogether, the second term satisfies
\[2 \, s^{-(\alpha + 1)}_n \int_{0}^{c_n^{-1}} (1-x^2)^{-(\alpha+1)/2} \,dx  \leq  \tfrac{2}{(\alpha-1) }(\tfrac{\pi}{2})^{\alpha}  n^{\alpha+1} (2n)^{\alpha-1} =  \tfrac{\pi^\alpha}{(\alpha-1)} n^{2 \alpha},\]
for $\alpha>1$, and it is bounded by $2 n^2 \times \tfrac{\pi}{2} \ln(\pi n)  = \pi \ln(\pi n) n^2$ if $\alpha = 1$.
\end{proof}

\bigskip

\noindent
\textit{Proof of Theorem~\ref{main_thm}.}

Let $\bx \in \leja$.  For $k = n-1$, the $\tau$-Leja property yields $\|\ell_{k,n}^\bx\| \leq \tau^{-1}$. 
Hence, we have by Lemma~\ref{bound_AN}, and Lemma~\ref{est_spacing}
\begin{align*} \Lambda_n^\bx & \leq \sum_{k=0}^{n-1} \|\ell_{k,n}^\bx\| \leq 2 \tau^{-2} d^{\alpha_\tau}  \sum_{k=0}^{n-2} (\e_{k,n}^\bx)^{-\alpha_\tau} +  \tau^{-1} \\ 
& \leq  2 \tau^{-2} d^{\alpha_\tau}\, c_\tau^{-\alpha_\tau}  \sum_{k=0}^{n-2} \my^{-\alpha_\tau}(x_k) + \tau^{-1}.
\end{align*}
Let us show that $\my$ satisfies the hypotheses of Lemma~\ref{sum_to_int}. It is a $1$-Lipschitz function, and by Lemma~\ref{est_spacing}, it satisfies the first hypothesis (i), with $c = c_\tau$. As discussed in Remark~\ref{rmk_eps}, the existence of $\e>0$ such that (ii) holds is true, but we need to make sure that $\e$ can be taken independently of $n$.

We write $K = \cup_{j=1}^r [a_j, b_j]$ with $a_1<b_1 < a_2< \ldots < b_{r-1} < a_r < b_r$.  For $k \in \{0,\ldots, n-1\}$, recalling the notation $\eta=\tfrac{c_\tau}{2+c_\tau}$, we obviously have
\[
\left|K\cap (x_k-\eta \my(x_k),x_k+\eta\my(x_k))
\right|
\ge \min(\eta \my(x_k), b_j - a_j)
\]
We must  find $\e>0$ such that $ \min(\eta\, \my(x_k), b_j - a_j) \geq \e\, \eta\, \my(x_k)$ for all $n \in \N^*$ and $k\in \{0, \ldots, n-1\}$. Such an $\e>0$ exists because $(\|\my\|)_{n \in \N^*}$ is bounded. Consequently, returning to the series of inequalities, Lemma~\ref{sum_to_int} entails, setting $M_\tau(\e) := \tfrac{2+ c_\tau}{\e\, c_\tau} (\tfrac{2+2c_\tau}{2 + c_\tau})^{\alpha_\tau+1}$
\begin{align*} \Lambda_n^\bx &  \leq   \tau^{-1} +  2 \tau^{-2} d^{\alpha_\tau}\, c_\tau^{-\alpha_\tau} M_\tau(\e)  \int_K \my^{-(\alpha_\tau+1)}(x)\,dx  
\end{align*}
Lemma~\ref{estimate_int} then concludes the proof that $\Lambda_n^\bx  = O(n^{2 \alpha_\tau})$.

\textit{Explicit computations for $K = [-1,1]$.}
For $K = [-1,1]$, let us show that we may take $\e= 1$.
We need to show that $\min(\eta \my(x_k), 2) \geq \eta \my(x_k)$ for all $n \in \N^*$ and $k \in \{0, \ldots, n-2\}$. The result will be proved if we show that $\eta \|\my\| \leq 2$. The explicit formula~\eqref{explicit_rho} shows that $\|\my\| = \my(0) = \sinh(\tfrac{1}{n}) \leq\sinh(1)$, so that 
\[\eta \|\my\|  =\tfrac{c_\tau}{2+c_\tau} \|\my\| = \tfrac{\tau}{2+2 e +\tau}  \|\my\|  \leq \tfrac{1}{2+2e} \sinh(1) \leq 2.\]

We end up with
\begin{align*} \Lambda_n^\bx &  \leq  \tau^{-1}  + 2^{\alpha_\tau +1} \tau^{-2} \, c_\tau^{-\alpha_\tau} M_\tau(1) \int_K \my^{-(\alpha_\tau+1)}(x)\,dx  \leq \tau^{-1}   + 2^{\alpha_\tau +1} \tau^{-2} \, c_\tau^{-\alpha_\tau} M_\tau(1) \, C_{\alpha_\tau} n^{2\alpha_\tau}  \\
 & \leq \left(\tau^{-1} + 2^{\alpha_\tau +1}\tau^{-2} \, c_\tau^{-\alpha_\tau} M_\tau(1) \, C_{\alpha_\tau} \right) n^{2 \alpha_\tau}.
\end{align*}
\qed


\paragraph{AI tool disclosure.}
The author used ChatGPT as an assistance tool for proofreading and for exploring auxiliary technical estimates. In particular, the refinement of the constant $1/8$ into the optimised constant $\theta$ was first suggested during this AI-assisted exploratory process. All mathematical arguments, statements, computations, and references were subsequently checked and validated by the author, who takes full responsibility for the content of the manuscript.

\bibliographystyle{unsrt}
\bibliography{Biblio_Leja.bib}

\appendix
\label{app}
\section*{From $1/8$ to $\theta$}

Following~\cite{Explicit_Lebesgue_Interval2022}, let us define $\theta$ to be the best positive constant such that, for all $A,B>0$ and $0<a<A$, there holds
\[\Big(\frac{A-a}{a}\Big)^{\tfrac{B}{A+B}}  \Big(\frac{B+a}{a}\Big)^{\tfrac{A}{A+B}}  \leq \frac{A}{a}  \Big(\frac{\min(A,B)}{\min(a,B)}\Big)^\theta.\]
By homogeneity, setting $A = 1$ and taking the logarithm, the inequality becomes, for all $B>0$ and $0< a <1$, 
\[\frac{B}{1+B} (\ln(1-a)- \ln(a)) + \frac{1}{1+B} (\ln(B+a) - \ln(a))  \leq  - \ln(a) + \theta \ln \Big(\frac{\min(1,B)}{\min(a,B)}\Big),\]
which boils down to 
\[\frac{B}{1+B} \ln(1-a) + \frac{1}{1+B} \ln(B+a)   \leq  \theta \ln \Big(\frac{\min(1,B)}{\min(a,B)}\Big),\]
If $B \leq 1$, the right-hand side is positive. The numerator on the left-hand side is given by $B \ln(1-a) + \ln(B+a) \leq 0$, since $(1-a)^{-B} \geq 1+ B a \geq B +a$. The inequality is hence trivially true for $B \leq 1$.

We may then consider only $B>1$, in which case the logarithm on the right-hand side simply equals $\ln(a^{-1})$. Rearranging terms, we find that $\theta$ is the best positive constant such that for all $B>1$ and $0< a <1$, there holds
\[\frac{B \ln(1-a)  + \ln(B+a)}{(1+B) \ln(a^{-1})} \leq \theta,\]
which is nothing but~\eqref{theta}.

\end{document}